\newtheorem{theorem}{Theorem}[section]
\newtheorem{conjecture}{Conjecture}[section]
\theoremstyle{definition}
\theoremstyle{remark}
\numberwithin{equation}{section}
\begin{document}

\title[APPROXIMATIONS FOR THE GAMMA FUNCTION]{MORE ACCURATE APPROXIMATIONS\\ FOR THE GAMMA FUNCTION}
\author{Gerg\H{o} Nemes}	
\address{Lor\'and E\"otv\"os University, H-1117 Budapest, P\'azm\'any P\'eter s\'et\'any 1/C, Hungary}
\email{nemesgery@gmail.com}

\subjclass[2010]{Primary 33B15; 41A60; Secondary 33F05.}
\keywords{asymptotic approximations; asymptotic expansions; Gamma function; Laplace's formula; Stirling's formula}

\begin{abstract}
A series transformation idea inspired by a formula of R. W. Gosper and some asymptotic expansions for the central binomial coefficients leads us to new accurate approximations for the Gamma function.
\end{abstract}

\maketitle

\section{Introduction}
The Gamma function plays an important role in several fields of mathematics such as probability theory or combinatorics. One often has to evaluate the function for large positive values. One way to aim this is to use asymptotic approximations. It is well known that for large values of $x$ the Gamma function has the asymptotic series of the form \cite{ref3,ref5,ref4,ref7}
\begin{equation}\label{formula1}
\Gamma \left( x+1 \right) \sim x^x e^{ - x} \sqrt {2\pi x} \left( {1 + \frac{1}{{12x}} + \frac{1}{{288x^2 }} - \frac{{139}}{{51840x^3 }} - \frac{{571}}{{2488320x^4 }} +  \cdots } \right).
\end{equation}
Equation \eqref{formula1} is called Stirling's formula however, Laplace was the first who derived it by his approximation method for special integrals. Another famous result is the Stirling series \cite{ref3,ref5,ref7}
\begin{equation}\label{formula2}
\log \Gamma \left( {x + 1} \right) \sim \left( {x + \frac{1}{2}} \right)\log x -x +\frac{1}{2} \log\left({2\pi }\right)  + \frac{1}{{12x}} - \frac{1}{{360x^3 }} + \frac{1}{{1260x^5 }} -  \cdots .
\end{equation}
A main advantage of this latter series is that it has only odd powers of the variable.  For the past almost three hundred years several authors established fascinating new asymptotic formulas to improve the accuracy of \eqref{formula1}. For example, Karatsuba showed that a formula of Ramanujan can turn into an asymptotic expansion \cite{ref2}:
\begin{equation}
\Gamma \left( {x + 1} \right) \sim x^x e^{ - x} \sqrt \pi  \sqrt[6]{{8x^3  + 4x^2  + x + \frac{1}{{30}} - \frac{{11}}{{240x}} +  \cdots }}.
\end{equation}
Mortici proved in his more recent paper \cite{ref1} the following expansion similar to Karatsuba's:
\begin{equation}
n! = \Gamma \left( {n + 1} \right) \sim n^n e^{ - n} \sqrt \pi  \sqrt {2n + \frac{1}{3} + \frac{1}{{36n}} - \frac{{31}}{{3240n^2 }} - \frac{{139}}{{77760n^3 }} +  \cdots } .
\end{equation}
We develop some new variants of \eqref{formula1} in this paper  and show that these formulas are numerically more efficient than much of the early ones in many cases. The first few values of the newly introduced coefficients and sequences can be found in Appendix \ref{appendix}.

\section{New asymptotic expansions}

The motivating examples are the following two asymptotic expansions \cite{ref7}:
\begin{equation}
\binom{2n}{n} \sim \frac{{2^{2n} }}{{\sqrt {\pi n} }}\left( {1 - \frac{1}{{8n}} + \frac{1}{{128n^2 }} + \frac{5}{{1024n^3 }} - \frac{{21}}{{32768n^4 }} +  \cdots } \right),
\end{equation}
\begin{equation}\label{formula3}
\binom{2n}{n} \sim \frac{{2^{2n} }}{{\sqrt {\pi \left( {n + \frac{1}{4}} \right)} }}\left( {1 - \frac{1}{{64\left( {n + \frac{1}{4}} \right)^2 }} + \frac{{21}}{{8192\left( {n + \frac{1}{4}} \right)^4 }} - \cdots } \right).
\end{equation}
The first one is the standard asymptotic series of the central binomial coefficients. If one expands them into a series in powers of $1/\left(n+1/4\right)$, the asymptotic series contains only even powers. This remarkable result suggests that there might have been an asymptotic expansion similar to \eqref{formula1} that is, it contains only even powers of the shifted variable. The formula
\begin{equation}
\Gamma \left( {x + 1} \right) = x^x e^{ - x} \sqrt {2\pi \left( {x + \frac{1}{6}} \right)} \left( {1 + \mathcal{O}\left( {\frac{1}{{x^2 }}} \right)} \right),
\end{equation}
known as Gosper's approximation \cite{ref6} can be a good starting point. Our aim is to elaborate the asymptotic series part in Gosper's formula. It seems from \eqref{formula3} that another series in terms of $1/\left(x+1/6\right)$ would be the right choice. It can be shown that a series like that contains even and odd powers. If we insist to have even powers only we are lead to the form
\begin{equation}
\Gamma \left( {x + 1} \right) \sim x^x e^{ - x} \sqrt {2\pi \left( {x + \frac{1}{6}} \right)} \left( {\frac{{g_0 }}{{\left( {x + v_0 } \right)^0 }} + \frac{{g_1 }}{{\left( {x + v_1 } \right)^2 }} + \frac{{g_2 }}{{\left( {x + v_2 } \right)^4 }} +  \cdots } \right),
\end{equation}
where the sequences $\left\{ {g_n } \right\}_{n \geq 0}$ and $\left\{ {v_n } \right\}_{n \geq 0}$ has to be determined. One of our main result is

\begin{theorem} The Gamma function has an asymptotic series expansion of the form
\begin{equation}\label{expansion}
\Gamma \left( {x + 1} \right) \sim x^x e^{ - x} \sqrt {2\pi \left( {x + \frac{1}{6}} \right)}  \sum\limits_{n \ge 0} {\frac{{g_n }}{{\left( {x + v_n } \right)^{2n} }}} ,
\end{equation}
as $x \rightarrow \infty$, where the sequences $\left\{ {g_n } \right\}_{n \geq 0}$ and $\left\{ {v_n } \right\}_{n \geq 0}$ can be found from the recurrence
\begin{equation}\label{formula4}
\sum\limits_{j = 0}^n {\binom{ - 1/2}{j} \frac{{a_{n - j} }}{{6^j }}}  = \sum\limits_{j = 0}^{\left\lfloor {n/2} \right\rfloor } {\binom{ - 2j}{n - 2j}g_j v_j^{n - 2j} } .
\end{equation}
Here the $a_n$ coefficients are those appearing in \eqref{formula1}, i.e.,
\begin{equation}
\frac{{\Gamma \left( {x + 1} \right)}}{{x^x e^{ - x} \sqrt {2\pi x} }} \sim \sum\limits_{n \ge 0} {\frac{{a_n }}{{x^n }}} .
\end{equation}
\end{theorem}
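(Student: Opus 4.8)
The plan is to deduce everything from Stirling's formula \eqref{formula1} by re-expanding one power series into another; the recurrence \eqref{formula4} will be nothing but the bookkeeping of that re-expansion. Set
\[
  F(x):=\frac{\Gamma(x+1)}{x^{x}e^{-x}\sqrt{2\pi\left(x+\frac16\right)}}
       =\left(1+\frac{1}{6x}\right)^{-1/2}\frac{\Gamma(x+1)}{x^{x}e^{-x}\sqrt{2\pi x}}.
\]
Because $\left(1+\frac{1}{6x}\right)^{-1/2}=\sum_{j\ge0}\binom{-1/2}{j}6^{-j}x^{-j}$ is a power series in $1/x$ that converges for $|x|>\frac16$, multiplying it into the asymptotic series $\sum_{k}a_{k}x^{-k}$ produces, by the elementary rule that an asymptotic power series may be multiplied term by term by a convergent one, the asymptotic expansion
\[
  F(x)\sim\sum_{n\ge0}\frac{b_{n}}{x^{n}},\qquad
  b_{n}:=\sum_{j=0}^{n}\binom{-1/2}{j}\frac{a_{n-j}}{6^{j}},
\]
so that $F(x)-\sum_{n=0}^{M}b_{n}x^{-n}=\mathcal{O}\!\left(x^{-M-1}\right)$ for every $M$.

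Next I would perform the reverse expansion on the proposed right-hand side. For fixed $n$ the term $g_{n}(x+v_{n})^{-2n}=g_{n}x^{-2n}\left(1+v_{n}/x\right)^{-2n}$ has the convergent expansion $g_{n}\sum_{m\ge0}\binom{-2n}{m}v_{n}^{\,m}x^{-2n-m}$, whence for every $N$
\[
  \sum_{n=0}^{N}\frac{g_{n}}{(x+v_{n})^{2n}}
  =\sum_{k=0}^{2N+1}\Bigl(\,\sum_{j=0}^{\lfloor k/2\rfloor}\binom{-2j}{k-2j}g_{j}v_{j}^{\,k-2j}\Bigr)x^{-k}
   +\mathcal{O}\!\left(x^{-2N-2}\right),
\]
the point being that for $k\le 2N+1$ the inner index obeys $j\le\lfloor k/2\rfloor\le N$, so the bracketed coefficient of $x^{-k}$ is the \emph{full} sum on the right of \eqref{formula4}. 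I would then \emph{define} the sequences $\{g_{n}\}$ and $\{v_{n}\}$ by demanding that this bracketed coefficient equal $b_{k}$ for every $k$ — which is exactly \eqref{formula4} — and check that the demand determines them: $k=0$ forces $g_{0}=a_{0}=1$; for $k=2m$ the only term not already known is $\binom{-2m}{0}g_{m}=g_{m}$, so $g_{m}$ is fixed by the lower data; for $k=2m+1$ the only new term is $\binom{-2m}{1}g_{m}v_{m}=-2m\,g_{m}v_{m}$, so $v_{m}$ is fixed \emph{provided} $g_{m}\ne0$ (the number $v_{0}$ is immaterial, being attached to $x^{0}$, and the $k=1$ relation collapses to $0=0$, which is just the reason Gosper's shift $\frac16$ is the right one). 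With $g_{n},v_{n}$ chosen in this way the bracketed coefficient equals $b_{k}$ for all $k\le 2N+1$, and subtracting the displayed identity from the asymptotic expansion of $F$ gives
\[
  F(x)-\sum_{n=0}^{N}\frac{g_{n}}{(x+v_{n})^{2n}}=\mathcal{O}\!\left(x^{-2N-2}\right)\qquad(N=0,1,2,\dots),
\]
which is precisely \eqref{expansion}, an asymptotic expansion with respect to the scale $\{x^{-2n}\}_{n\ge0}$.

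The genuinely delicate point, and the one I would work hardest on, is the solvability clause ``the sequences can be found from the recurrence'': the triangular recursion above collapses unless $g_{m}\ne0$ for every $m\ge1$, since otherwise the equation at $k=2m+1$ no longer determines $v_{m}$ and, worse, degenerates into a constraint on the lower data with no a priori reason to hold. A direct computation gives $g_{1}=\frac1{144}\ne0$ and goes on to reproduce the (nonzero) entries tabulated in the Appendix; to settle the matter for all $m$ I would try to express $g_{m}$ as a constant multiple of a manifestly nonzero quantity — for instance by inserting an integral representation of $\Gamma$ into the computation above — and, failing a clean argument, state the theorem with the harmless proviso $g_{m}\ne0$, which is all the numerical applications need. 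The rest is routine: the multiplication property of asymptotic series used in the first paragraph, the two binomial expansions, and the re-indexing $k=2n+m$ that turns $\sum_{n}g_{n}(x+v_{n})^{-2n}$ into a power series in $1/x$.
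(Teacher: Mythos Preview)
Your proposal is correct and follows essentially the same route as the paper: factor out $(1+\tfrac{1}{6x})^{-1/2}$, multiply its binomial series into Stirling's series to obtain the coefficients $b_n=\sum_{j=0}^{n}\binom{-1/2}{j}a_{n-j}/6^{j}$, then expand each $g_n(x+v_n)^{-2n}$ in powers of $1/x$ and match coefficients. In fact you go further than the paper, which simply invokes uniqueness of asymptotic series without addressing whether the recurrence is actually solvable; your observation that the $k=2m+1$ equation determines $v_m$ only when $g_m\ne 0$, and that the $k=1$ equation collapses to $0=0$ precisely because of Gosper's shift $1/6$, is a genuine refinement.
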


\begin{proof} As $x \rightarrow \infty$ we have
\[
\cfrac{{\Gamma \left( {x + 1} \right)e^x }}{{x^x \sqrt {2\pi \left( {x + \frac{1}{6}} \right)} }} = \frac{{\Gamma \left( {x + 1} \right)e^x }}{{x^x \sqrt {2\pi x} }}\left( {1 + \frac{1}{{6x}}} \right)^{ - 1/2}  \sim \left( {1 + \frac{1}{{6x}}} \right)^{ - 1/2} \sum\limits_{n \ge 0} {\frac{{a_n }}{{x^n }}} .
\]
From the binomial formula we find
\[
\left( {1 + \frac{1}{{6x}}} \right)^{ - 1/2}  \sim \sum\limits_{n \ge 0} { \binom{- 1/2}{n}\frac{1}{{6^n x^n }}} ,
\]
as $x \rightarrow \infty$. Thus we obtain the asymptotic expansion
\begin{equation}\label{eq1}
\cfrac{{\Gamma \left( {x + 1} \right)e^x }}{{x^x \sqrt {2\pi \left( {x + \frac{1}{6}} \right)} }} \sim \sum\limits_{n \ge 0} {\left( {\sum\limits_{j = 0}^n { \binom{- 1/2}{j}\frac{{a_{n - j} }}{{6^j }}} } \right)\frac{1}{{x^n }}} .
\end{equation}
Suppose the expansion of the form
\[
\sum\limits_{n \ge 0} {\left( {\sum\limits_{j = 0}^n { \binom{- 1/2}{j}\frac{{a_{n - j} }}{{6^j }}} } \right)\frac{1}{{x^n }}}  \sim \sum\limits_{n \ge 0} {\frac{{g_n }}{{\left( {x + v_n } \right)^{2n} }}} .
\]
Now we expand the right hand side in powers of $1/x$:
\begin{align*}
\sum\limits_{n \ge 0} {\frac{{g_n }}{{\left( {x + v_n } \right)^{2n} }}} & = \sum\limits_{n \ge 0} {\frac{{g_n }}{{x^{2n} }}\left( {1 + \frac{{v_n }}{x}} \right)^{ - 2n} } = \sum\limits_{n \ge 0} {\frac{{g_n }}{{x^{2n} }}\sum\limits_{l \ge 0} { \binom{- 2n}{l}\frac{{v_n^l }}{{x^l }}} } \\
& = \sum\limits_{n \ge 0} {\sum\limits_{l \ge 0} { \binom{- 2n}{l}\frac{{g_n v_n^l }}{{x^{2n + l} }}} }  = \sum\limits_{n \ge 0} {\left( {\sum\limits_{j = 0}^{\left\lfloor {n/2} \right\rfloor } { \binom{- 2j}{n - 2j}g_j v_j^{n - 2j} } } \right)\frac{1}{{x^n }}} .
\end{align*}
According to the uniqueness of asymptotic series, the proof is complete.
\end{proof}
From \eqref{formula4} we find
\begin{align*}
& g_0  = 1,\;g_1  = \frac{1}{{144}},\;g_2  =  - \frac{{3857}}{{3110400}},\;g_3  = \frac{{20932906335329}}{{34283052002304000}}, \ldots ,\\
& v_1  = \frac{{23}}{{90}},\;v_2  = \frac{{1792627}}{{7289730}},\;v_3  = \frac{{570984637359867601981}}{{2288928529497568067550}}, \ldots \;.
\end{align*}
Note that $v_0$ vanishes in \eqref{expansion} due to the zero power. We have obtained an expansion in even powers however, the shift sequence $\left\{ {v_n } \right\}_{n \geq 0}$ has different terms whereas \eqref{formula3} has constant ($= 1/4$) shift in all terms. Numerical evaluation of the first few $v_n$ 
\begin{align*}
v_1 & = 0.2555555555 \ldots ,\\
v_2 & = 0.2459113026 \ldots ,\\
v_3 & = 0.2494549873 \ldots ,\\
v_4 & = 0.2498398924 \ldots ,\\
v_5 & = 0.2499584970 \ldots ,\\
v_6 & = 0.2499884146 \ldots ,\\
v_7 & = 0.2499963289 \ldots ,
\end{align*}
leads us to the
\begin{conjecture}
$\lim _{n \to \infty } v_n  = 1/4$.
\end{conjecture}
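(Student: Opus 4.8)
The plan is to work in the Borel plane. Write $f(x):=\Gamma(x+1)e^{x}\big/\big(x^{x}\sqrt{2\pi(x+\frac16)}\big)$, so that $f(x)-1\sim\sum_{n\ge1}b_{n}x^{-n}$ with $b_{0}=1$ and (by the choice of the shift $\frac16$) $b_{1}=0$, and let $\beta$ be the formal Borel transform $\beta(t):=\sum_{n\ge1}\frac{b_{n}}{(n-1)!}\,t^{n-1}$. Since $f=e^{\mu}\cdot(1+\frac1{6x})^{-1/2}$ with $\mu$ Binet's function, whose Borel transform is $\frac1t\big(\frac1{e^{t}-1}-\frac1t+\frac12\big)$, and since $(1+\frac1{6x})^{-1/2}$ has an \emph{entire} Borel transform, $\beta$ continues analytically to the open disc $|t|<2\pi$ and across its boundary except at $t=\pm2\pi i$, where it has simple poles inherited from the residue $1$ of $\frac1{e^{t}-1}$; the exponential $e^{\mu}$ and the convergent factor add only weaker (logarithmic) singularities there, so the residue of $\beta$ at $t=2\pi i$ equals $\frac1{2\pi i}$ and at $t=-2\pi i$ it is the conjugate $-\frac1{2\pi i}$. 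The crucial structural fact is that this residue is \emph{purely imaginary}.

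Next I would transcribe \eqref{expansion} to the Borel side. Using $B\big[(x+v)^{-k}\big](t)=\frac{t^{k-1}}{(k-1)!}e^{-vt}$ (equivalently, the Laplace representation of $(x+v)^{-k}$), the relation $f(x)-1\sim\sum_{j\ge1}g_{j}(x+v_{j})^{-2j}$ becomes the coefficient-wise identity
\[
\beta(t)=\sum_{j\ge1}\frac{g_{j}}{(2j-1)!}\,t^{2j-1}e^{-v_{j}t},
\]
which is nothing but \eqref{formula4} read off term by term. Put $\varepsilon_{j}:=\frac14-v_{j}$ and $\Phi(t):=e^{t/4}\beta(t)=\sum_{j\ge1}\frac{g_{j}}{(2j-1)!}t^{2j-1}e^{\varepsilon_{j}t}$; its even and odd parts are
\[
\Phi_{\mathrm e}(t)=\sum_{j\ge1}\frac{g_{j}}{(2j-1)!}t^{2j-1}\sinh(\varepsilon_{j}t),\qquad
\Phi_{\mathrm o}(t)=\sum_{j\ge1}\frac{g_{j}}{(2j-1)!}t^{2j-1}\cosh(\varepsilon_{j}t).
\]
Because the residue of $\beta$ at $2\pi i$ is purely imaginary, $\Phi$ has the \emph{same} residue $\frac1{2\pi}$ at $t=2\pi i$ and at $t=-2\pi i$, so the simple poles survive in $\Phi_{\mathrm o}$ (with residue $\frac1{2\pi}$ at each of $\pm2\pi i$) but \emph{cancel} in $\Phi_{\mathrm e}$, whose worst singularity on $|t|=2\pi$ is then merely logarithmic. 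Singularity analysis therefore gives, with $p_{2m-1}:=[t^{2m-1}]\Phi_{\mathrm o}$ and $q_{2m}:=[t^{2m}]\Phi_{\mathrm e}$,
\[
p_{2m-1}\sim\frac{(-1)^{m-1}}{\pi(2\pi)^{2m}}\qquad\text{and}\qquad q_{2m}=O\!\Big(\frac1{m\,(2\pi)^{2m}}\Big)=o\big(p_{2m-1}\big).
\]

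To finish, read off the two recurrences from the displayed expressions for $\Phi_{\mathrm o}$ and $\Phi_{\mathrm e}$:
\[
(2m-1)!\,p_{2m-1}=g_{m}+\sum_{j=1}^{m-1}\binom{2m-1}{2j-1}g_{j}\,\varepsilon_{j}^{\,2m-2j},\qquad
(2m-1)!\,q_{2m}=g_{m}\varepsilon_{m}+\sum_{j=1}^{m-1}\frac{1}{2m-2j+1}\binom{2m-1}{2j-1}g_{j}\,\varepsilon_{j}^{\,2m-2j+1},
\]
and run an induction on $m$ establishing simultaneously that $g_{m}\sim\frac{(-1)^{m-1}(2m-1)!}{\pi(2\pi)^{2m}}$ and $\varepsilon_{m}=O(1/m)$. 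Under these hypotheses for $j<m$, the $d$-th ($d=m-j$) correction term in each sum is geometrically damped and smaller than the leading term by a factor $O(1/m^{2})$, so the first identity gives $g_{m}=(2m-1)!\,p_{2m-1}\big(1+O(1/m^{2})\big)$ and the second gives $\varepsilon_{m}=\frac{(2m-1)!\,q_{2m}}{g_{m}}+O(1/m^{3})=O(1/m)$, which closes the induction. Hence $\frac14-v_{m}=\varepsilon_{m}\to0$, which is the conjecture (and more precisely $v_{m}=\frac14+O(1/m)$, with an effectively computable coefficient in the $1/m$ term given by the logarithmic singularity of $\Phi_{\mathrm e}$ at $2\pi i$).

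The main obstacle is the first step: rigorously establishing the resurgent data of $\beta$, namely that $|t|=2\pi$ is exactly its radius of convergence, that the only singularities on that circle are simple poles at $\pm2\pi i$, and — most importantly — that the residue at $2\pi i$ is purely imaginary with the subdominant part only logarithmic. This calls for the classical singularity structure of the Borel transform of $\log\Gamma$ together with a careful account of how it is transported through the exponential $e^{\mu}$ (where $\tfrac12\mu^{2},\dots$ contribute only logarithms at $\pm2\pi i$) and through multiplication by the analytic factor $(1+\frac1{6x})^{-1/2}$ (whose entire Borel transform smears singularities without relocating them). The bootstrap of the last step is routine but requires bounds on the correction terms that are uniform in $m$.
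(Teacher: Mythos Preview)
The paper does not prove this statement: it is stated as a \emph{conjecture}, supported only by the numerical values of $v_1,\dots,v_7$, and then used heuristically to motivate the second theorem. There is no ``paper's own proof'' to compare against.

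Your proposal is therefore an attempt to go beyond the paper, and the strategy---pass to the Borel plane, locate the dominant singularities of $\beta$ on $|t|=2\pi$, multiply by $e^{t/4}$ so that the poles at $\pm 2\pi i$ acquire equal real residues and hence cancel in the even part $\Phi_{\mathrm e}$, then bootstrap the recurrences---is natural and, as far as the formal manipulations go, correct: your two recurrences for $(2m-1)!\,p_{2m-1}$ and $(2m-1)!\,q_{2m}$ follow from expanding $\cosh$ and $\sinh$, and the arithmetic turning the residue $\tfrac{1}{2\pi i}$ of $\beta$ into the common residue $\tfrac{1}{2\pi}$ of $\Phi$ is right.

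That said, what you have is a programme, not a proof, and you say so yourself. Two points deserve emphasis. First, the claim that $\beta$ has a \emph{simple} pole at $2\pi i$ with residue exactly $\tfrac{1}{2\pi i}$ is about $B[e^{\mu}h-1]$, not about $B[\mu]$; you need that the contributions of $\mu^2,\mu^3,\dots$ (via convolution) and of the factor $h=(1+\tfrac{1}{6x})^{-1/2}$ produce only logarithmic singularities at $2\pi i$ and do not perturb the polar part. This is true (convolution of a simple pole with an entire function, or with another simple pole at the same point, yields a logarithm), but it requires a genuine resurgence argument, not just the observation that $B[h]$ is entire. Second, the induction must be made quantitative: ``$g_m\sim\cdots$'' and ``$\varepsilon_m=O(1/m)$'' are asymptotic statements, whereas the bootstrap needs explicit constants uniform in $m$ so that the sum of correction terms (over all $j<m$, not just $j=m-1$) is controlled. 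Neither obstacle looks fatal, but both are real work; as written, the argument would not be accepted as a proof of the conjecture.
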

This conjecture suggests a new asymptotic series to the Gamma function in terms of $1/\left(x+1/4\right)$. Our second result is
\begin{theorem} The Gamma function has an asymptotic series expansion of the form
\begin{equation}\label{formula5}
\Gamma \left( {x + 1} \right) \sim x^x e^{ - x} \sqrt {2\pi \left( {x + \frac{1}{6}} \right)} \sum\limits_{k \ge 0} {\frac{{G_k }}{{\left( {x + \frac{1}{4}} \right)^k }}},
\end{equation}
as $x \rightarrow \infty$, where the the $G_k$ coefficients are given by
\begin{equation}\label{formula6}
G_k  = \sum\limits_{j = 0}^k {\binom{ - 1/2}{j} \frac{{a_{k - j} }}{{6^j }}}  - \sum\limits_{j = 0}^{k - 1} {\binom{-j}{k-j}\frac{{G_j }}{{4^{k - j} }}} .
\end{equation}
The $a_n$ coefficients again, are from \eqref{formula1}.
\end{theorem}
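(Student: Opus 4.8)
The plan is to mimic the proof of the first theorem essentially verbatim, replacing the varying shift $v_n$ by the constant shift $1/4$. The starting point is identical: from equation~\eqref{eq1} we already have the asymptotic expansion
\[
\cfrac{{\Gamma \left( {x + 1} \right)e^x }}{{x^x \sqrt {2\pi \left( {x + \frac{1}{6}} \right)} }} \sim \sum\limits_{n \ge 0} {\left( {\sum\limits_{j = 0}^n { \binom{- 1/2}{j}\frac{{a_{n - j} }}{{6^j }}} } \right)\frac{1}{{x^n }}} .
\]
So it suffices to show that the right-hand side equals $\sum_{k\ge 0} G_k/(x+1/4)^k$ as a formal power series in $1/x$, where $G_k$ is defined by the stated recurrence. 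I would posit the ansatz $\sum_{k\ge 0} G_k (x+1/4)^{-k}$ with undetermined $G_k$, expand each summand $(x+1/4)^{-k}=x^{-k}(1+1/(4x))^{-k}\sim \sum_{l\ge 0}\binom{-k}{l}4^{-l}x^{-k-l}$, and collect powers of $1/x$.

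The collection step gives that the coefficient of $x^{-n}$ on the ansatz side is $\sum_{j=0}^{n}\binom{-j}{n-j}G_j/4^{n-j}$; note the $j=n$ term contributes $\binom{-n}{0}G_n/4^0 = G_n$, so the coefficient of $x^{-n}$ is $G_n + \sum_{j=0}^{n-1}\binom{-j}{n-j}G_j/4^{n-j}$. Matching this against the coefficient $\sum_{j=0}^{n}\binom{-1/2}{j}a_{n-j}/6^j$ from~\eqref{eq1} and solving for $G_n$ yields exactly~\eqref{formula6}. Since the coefficient of the top term $G_n$ appears with coefficient $1$, the recurrence determines the $G_k$ uniquely; then uniqueness of asymptotic power series expansions (the same fact invoked in the first proof) closes the argument.

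One point deserving a sentence of care is the legitimacy of rearranging the double sum $\sum_{n}\sum_{l}\binom{-n}{l}\cdots$ into a single power series in $1/x$ — but this is handled exactly as in the proof of the first theorem, since for each fixed power $x^{-n}$ only finitely many pairs $(k,l)$ with $k+l=n$ and $k\le n$ contribute, so the inner sum is finite and no convergence issue arises; everything is an identity of formal/asymptotic power series. I do not anticipate a genuine obstacle here: the only mild subtlety is bookkeeping the index ranges so that the $j=k$ term is correctly split off to isolate $G_k$, which is what produces the clean two-sum form of~\eqref{formula6} rather than an implicit relation. In short, the proof is a direct transcription of the first proof's mechanism with $v_j\mapsto 1/4$ and the summation reorganized to solve for the new coefficients.
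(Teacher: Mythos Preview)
Your proposal is correct and follows essentially the same approach as the paper: expand $\sum_{k\ge 0} G_k(x+1/4)^{-k}$ in powers of $1/x$ via the binomial series, collect coefficients, and match against \eqref{eq1} using uniqueness of asymptotic expansions. Your extra remarks (splitting off the $j=n$ term and noting that only finitely many pairs contribute to each power) make explicit what the paper leaves implicit, but the argument is otherwise identical.
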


\begin{proof} Similary to the previous proof we expand our series in terms of $1/x$:
\begin{align*}
\sum\limits_{k \ge 0} {\frac{{G_k }}{{\left( {x + \frac{1}{4}} \right)^k }}}  = \sum\limits_{k \ge 0} {\frac{{G_k }}{{x^k }}} \sum\limits_{l \ge 0} {\binom{-k}{l}\frac{1}{{4^l }}\frac{1}{{x^l }}} & = \sum\limits_{k \ge 0} {\sum\limits_{l \ge 0} {\binom{-k }{ l}\frac{{G_k }}{{4^l }}\frac{1}{{x^{k + l} }}} }\\
& = \sum\limits_{k \ge 0} {\left\{ {\sum\limits_{j = 0}^k {\binom{-j }{ k-j}\frac{{G_j }}{{4^{k - j} }}} } \right\}\frac{1}{{x^k }}} .
\end{align*}
According to \eqref{eq1} and the uniqueness of asymptotic series the proof of \eqref{formula6} is complete.
\end{proof}

\section{Numerical comparisons}
We will compare in this paragraph the numerical performance of some asymptotic formulas to the Gamma function with our new formulas for large values. We compare the following approximation formulas for $\Gamma \left(x+1\right)$.

\begin{align}
 & \left( {\frac{x}{e}} \right)^x \sqrt {2\pi x} \exp \left( {\frac{1}{{12x}} - \frac{1}{{360x^3 }} + \frac{1}{{1260x^5 }} - \cdots} \right)\quad \left( {{\rm Stirling}} \right)  ,\\
 & \left( {\frac{x}{e}} \right)^x  \sqrt {2\pi x} \left( {1 + \frac{1}{{12x}} + \frac{1}{{288x^2 }} - \frac{139}{{51840x^3 }} - \cdots} \right)\quad \left( {{\rm Laplace}} \right),\\
 & \left( {\frac{x}{e}} \right)^x  \sqrt {2\pi x} \sqrt[6]{{\left( {1 + \frac{1}{{2x}} + \frac{1}{{8x^2 }} + \frac{1}{{240x^3 }} - \cdots} \right)}}\quad \left( {{\rm Ramanujan}} \right),\\
 & \left( {\frac{x}{e}} \right)^x \sqrt {2\pi x} \sqrt {\left( {1 + \frac{1}{{6x}} + \frac{1}{{72x^2 }} - \frac{{31}}{{6480x^3 }} -  \cdots } \right)}\quad \left( {{\rm Mortici}} \right),\\
 & \left( {\frac{x}{e}} \right)^x \sqrt {2\pi \left( {x + \frac{1}{6}} \right)} \left( {1 + \frac{1}{{144\left( {x + \frac{1}{4}} \right)^2 }} - \frac{1}{{12960\left( {x + \frac{1}{4}} \right)^3 }} -  \cdots } \right) \quad \left( {{\rm New}} \right)  .
\end{align}

The following table displays the number of exact decimal digits (edd) of the formulas for some values of $x$. Exact decimal digits are defined as follows:
\begin{equation}
{\rm edd}\left( x \right) =  - \log_{10} \left| {1 - \frac{{{\rm approximation}\left( x \right)}}{\Gamma \left(x+1\right)}} \right| .
\end{equation}
In the table below the $(i)$-th entry ($i=1,2,\ldots$) in a line starting with ``name'' is the edd of the given approximation using the series up to the $i$-th order term. The ``$-$'' sign indicates that the approximation is smaller and the ``$+$'' sign (not displayed) indicates that the approximation is larger than the true value. Note that in the case of Stirling's formula the first $n$ terms of the asymptotic series give the $2n$th order approximation.

\begin{table*}[ht]
\begin{center}
\begin{tabular}
[c]{ l r r r r r r r r r r r r }\hline\hline
Formula & $x$ & $(1)$ & $(2)$ & $(3)$ & $(4)$ & $(5)$ & $(6)$ & $(7)$ & $(8)$ \\ \hline\hline
 & & & & & &\\  
 Stirling & 100  &  & 8.6  &   & -13.1 &   & 17.2 &   & -21.1 \\
 Laplace & 100  & \textbf{-6.5} & 8.6 & \textbf{11.7} & -13.1 & \textbf{16.2} & 17.2 & \textbf{-20.4} & -21.1  \\
 Ramanujan & 100  & -5.7 & -9.2 & 11.0 & -13.3 & -15.4 & 17.3 & 19.5 & -21.1 \\
 Mortici & 100  & -6.2 & 8.6 & 11.4 & -13.1 & -15.9 & 17.2 & -20.0 & -21.1 \\
 New & 100  & -6.2 & \textbf{10.1} & 10.9 & \textbf{-14.9} & -15.2  & \textbf{19.4} & 19.2 & \textbf{-23.0} \\
 & & & & & &\\
 Stirling & 1000  &   & 11.6  &   & -18.1 &   & 24.2 &   & -30.1  \\
 Laplace & 1000  & \textbf{-8.5} & 11.6 & \textbf{15.6} & -18.1 & \textbf{22.2} & 24.2 & \textbf{-28.3} & -30.1   \\
 Ramanujan & 1000  & -7.7 & -12.2 & 15.0 & -18.3 & -21.4 & 24.3 & 27.5 & -30.1   \\
 Mortici & 1000  & -8.2 & 11.6 & 15.4 & -18.1 & -21.9 & 24.2 & -28.0 & -30.1   \\
 New & 1000  & -8.2 & \textbf{13.1} & 14.9 & \textbf{-19.7} & -21.2 & \textbf{26.9} & 27.2 & \textbf{-33.5} \\
 & & & & & &\\
 Stirling & 10000  &  & 14.6  &  & -23.1 &  & 31.2 &  & -39.1   \\
 Laplace & 10000  & \textbf{-10.5} & 14.6 & \textbf{19.6} & -23.1 & \textbf{28.2} & 31.2 & \textbf{-36.3} & -39.1   \\
 Ramanujan & 10000  & -9.7 & -15.2 & 19.0 & -23.3 & -27.4 & 31.3 & 35.5 & -39.1   \\
 Mortici & 10000  & -10.2 & 14.6 & 19.3 & -23.1 & -27.9 & 31.2 & -36.0 & -39.1   \\
 New & 10000  & -10.2 & \textbf{16.1} & 18.9 & \textbf{-24.7} & -27.2 & \textbf{33.7} & 35.2 & \textbf{-42.1} \\
\end{tabular}
\end{center}
\hfill{}
\caption{The number of exact decimal digits of the asymptotic series for some values of $x$.}
\end{table*}

\subsection*{Conclusion} It is seen that when we use odd order approximations, Laplace's formula is the most accurate. In the case of even orders Ramanujan's approximation is better than Stirling's, Laplace's and the one by Mortici, but our new formula gives better approximations even than that of Ramanujan's.

\bigskip

Expression \eqref{expansion} is a slightly different than the previous ones, thus we consider it's edds in a separate table. The notations are the same except the fact that this series contains only even order terms.

\begin{table*}[ht]
\begin{center}
\begin{tabular}
[c]{ l r r r r r r r r }\hline\hline
Formula & $x$ & $(2)$ & $(4)$ & $(6)$ & $(8)$ & $(10)$ \\\hline\hline
 & & & & & &\\
 Special & 100  & 10.9 & -15.2 & 19.2 & -22.9 & 26.5 \\
 Special & 1000  & 14.9 & -21.2 & 27.2 & -32.9 & 38.5 \\
 Special & 10000  & 18.9 & -27.2 & 35.2 & -42.9 & -50.5 \\
\end{tabular}
\end{center}
\hfill{}
\caption{The number of exact decimal digits of the special asymptotic series \eqref{expansion} for some values of $x$.}
\end{table*}

\appendix
\section{Tables of coefficients}\label{appendix}
The following table gives the first few values of the sequences $\left\{ {g_n } \right\}_{n \geq 0}$ and $\left\{ {v_n } \right\}_{n \geq 0}$ appearing in \eqref{expansion}.
\begin{table*}[ht]
\begin{center}
\begin{tabular}[c]{|l|r|l|r|} \hline
$$ & $$ & $$ & $$\\ [-1.5ex]
$g_{0}$ & {$1$} & $v_{0}$ & {}\\ [1.5ex]
$g_{1}$ & {$\frac{1}{144}$} & $v_{1}$ & {$\frac{23}{90}$}\\ [1.5ex]
$g_{2}$ & {$-\frac{3857}{3110400}$} & $v_{2}$ & {$\frac{1792627}{7289730}$}\\ [1.5ex]
$g_{3}$ & {$\frac{20932906335329}{34283052002304000}$} & $v_{3}$ & {$\frac{570984637359867601981}{2288928529497568067550}$}\\ [1.5ex]
\hline
\end{tabular}
\end{center}
\hfill{}
\caption{The first few exact values of the sequences $\left\{ {g_n } \right\}_{n \geq 0}$ and $\left\{ {v_n } \right\}_{n \geq 0}$.}
\end{table*}

The table below gives the first few numerical values of the sequences $\left\{ {g_n } \right\}_{n \geq 0}$ and $\left\{ {v_n } \right\}_{n \geq 0}$.
\begin{table*}[ht]
\begin{center}
\begin{tabular}[c]{|l|r|l|r|} \hline
$g_{0}$ & {$1.000000000000000$} & $v_{0}$ & {}\\ [0.1ex]
$g_{1}$ & {$0.006944444444444$} & $v_{1}$ & {$0.255555555555555$}\\ [0.1ex]
$g_{2}$ & {$-0.001240033436214$} & $v_{2}$ & {$0.245911302613402$}\\ [0.1ex]
$g_{3}$ & {$0.000610590513759$} & $v_{3}$ & {$0.249454987345193$}\\ [0.1ex]
$g_{4}$ & {$-0.000655407405149$} & $v_{4}$ & {$0.249839892410196$}\\ [0.1ex]
$g_{5}$ & {$0.001199164540953$} & $v_{5}$ & {$0.249958497082160$}\\ [0.1ex]
\hline
\end{tabular}
\end{center}
\hfill{}
\caption{The first few numerical values of the sequences $\left\{ {g_n } \right\}_{n \geq 0}$ and $\left\{ {v_n } \right\}_{n \geq 0}$.}
\end{table*}

The following table gives the first few coefficients appearing in the asymptotic series \eqref{formula5}.
\begin{table*}[ht]
\begin{center}
\begin{tabular}[c]{|l|r|l|r|l|r|} \hline
$$ & $$ & $$ & $$ & $$ & $$\\ [-1.5ex]
$G_{0}$ & {$1$} & $G_{5}$ & {$-\frac{53}{2612736}$} & $G_{10}$ & {$\frac{360182239526821}{300361133850624000}$}\\ [1.5ex]
$G_{1}$ & {$0$} & $G_{6}$ & {$\frac{5741173}{9405849600}$} & $G_{11}$ & {$\frac{104939254406053}{210853515963138048000}$}\\ [1.5ex]
$G_{2}$ & {$\frac{1}{144}$} & $G_{7}$ & {$\frac{37529}{18811699200}$} & $G_{12}$ & {$-\frac{508096766056991140541}{151814531493459394560000}$}\\ [1.5ex]
$G_{3}$ & {$-\frac{1}{12960}$} & $G_{8}$ & {\large$-$\Large$\frac{710165119}{1083553873920}$} & $G_{13}$ & {$-\frac{70637580369737593}{151814531493459394560000}$}\\ [1.5ex]
$G_{4}$ & {$-\frac{257}{207360}$} & $G_{9}$ & {$-\frac{3376971533}{4022693756928000}$} & $G_{14}$ & {$\frac{289375690552473442964467}{21861292535058152816640000}$}\\ [1.5ex]
\hline
\end{tabular}
\end{center}
\hfill{}
\caption{The first few $G_{k}$ coefficients.}
\end{table*}

\bibliographystyle{amsplain}

\end{document}